\Crefname{paragraph}{Section}{Sections}
\newcommand{\rank}{\operatorname{rank}}
\newcommand{\ensemblenombre}[1]{\mathbb{#1}}
\newcommand{\N}{\ensemblenombre{N}}
\newcommand{\R}{} 
\renewcommand{\R}{\ensemblenombre{R}}
\newcommand{\C}{\ensemblenombre{C}}
\renewcommand{\leq}{\leqslant}
\renewcommand{\geq}{\geqslant}
\newcommand{\norme}[1]{\left\lVert#1\right\rVert}
\newcommand{\dive}[1]{\mathrm{div}}
\theoremstyle{plain} 
\newtheorem{proposition}{Proposition}[section] 
\newtheorem{theorem}[proposition]{Theorem}
\newtheorem{lemma}[proposition]{Lemma}
\theoremstyle{definition}
\newtheorem{definition}[proposition]{Definition}
\newtheorem{rmk}[proposition]{Remark}
\numberwithin{equation}{section}
\let\original@addcontentsline\addcontentsline
\newcommand{\dummy@addcontentsline}[3]{}
\newcommand{\DeactivateToc}{\let\addcontentsline\dummy@addcontentsline}
\newcommand{\ActivateToc}{\let\addcontentsline\original@addcontentsline}
\begin{document}

\title{The Kalman rank condition and the optimal cost for the null-controllability of coupled
Stokes systems\footnote{The research was done during a visit of the second author to La Sorbonne Universit\'e, funded by this university.}}
\author{Kévin Le Balc'h\footnote{Inria, Sorbonne Université, CNRS, Laboratoire Jacques-Louis Lions, Paris, France. {\tt kevin.le-balc-h@inria.fr}} , Luz de Teresa\footnote{Instituto de Matem\'aticas, Universidad Nacional Aut\'onoma de M\'exico, CDMX, M\'exico.  {\tt ldeteresa@im.unam.mx}}}

\maketitle
\begin{abstract}
This paper considers the controllability of a class of coupled Stokes systems with distributed controls. The coupling terms are of a different nature. The first coupling is through the principal part of the Stokes operator with a constant real-valued positive-definite matrix. The second one acts through zero-order terms with a constant real-valued matrix. We assume the controls have their support in different measurable subsets of the spatial domain. Our main result states that such a system is small-time null-controllable if and only if a Kalman rank condition is satisfied. Moreover, when this condition holds, we prove the sharp upper bound for the cost of null-controllability for these systems. Our method is based on two ingredients. We start from the recent spectral estimate for the Stokes operator from Chaves-Silva, Souza, and Zhang, and then we adapt Lissy and Zuazua’s strategy concerning the internal observability for coupled systems of linear parabolic equations to coupled Stokes systems.
\end{abstract}

\noindent {\bf Keywords:} Null controllability, Navier-Stokes systems, Spectral estimates, Control cost.

\noindent {\bf 2010 Mathematics Subject Classification.}  76D05, 35Q30, 93B05, 93B07, 93C10

\section{Introduction}

In the last fifteen years the challenging issue of controlling systems of coupled equations of the same nature has attracted the interest of the control community. This kind of problem is first motivated by practical applications because for instance coupled parabolic equations appear as model for chemical reactions, see \cite{ET89} or \cite{Pie10}. Moreover, it already comes naturally when studying insensitizing controls, that consists in finding a control function such that some functional depending on the solution of a partial differential equation is locally insensitive to perturbations of the initial condition, see   \cite{Lio92} where this notion was introduced. The first results dealing with the small-time null controllability for a single parabolic equation appear in the seminal papers of \cite{FR71}, \cite{LR95} and \cite{FI96} for the heat equation and in \cite{Ima01}, \cite{FCGIP04}, for the Stokes case (see also \cite{FCG06} and the references therein). That means that it is possible to drive an initial datum to zero at arbitrary time $T>0$ acting locally on a nonempty open subdomain $\omega$ of the whole spatial domain $\Omega$. The development in this area has been intense in the last years. In the case of scalar (heat) coupled equations an important number of challenging problems has been solved, see \cite{AKBGBdT11} for a survey of results until 2011. In the case of coupled Stokes or Navier-Stokes systems, to our knowledge, mainly the cases of two coupled systems have been treated \cite{Gue07}, \cite{CG14}, \cite{CGG15}. Very recently, the second author of the present paper and her collaborators considered the case of coupled Stokes equations and derive a necessary and sufficient condition for the small-time null-controllability to hold, see \cite{TdTWZ23} and \cite{TdTWZ24}. This type of condition is a generalization of the well-known Kalman rank condition in the finite dimensional case. The goal of this paper is to extend the results of \cite{TdTWZ23} and \cite{TdTWZ24} in several directions by using a complete different approach.

We consider a system coupling $n$ Stokes equations, $n \geq 1$, with $m$ distributed controls, $m \geq 1$, that is for $1 \leq i \leq n$,
\begin{equation}
\label{eq:controlcoupledstokes}
\left\{
\begin{array}{l l l}
\partial_t y^{(i)} + \sum_{j=1}^n d_{i,j} (-\Delta y^{(j)} + \nabla p^{(j)}) + \sum_{j=1}^n q_{i,j} y^{(j)} \\
\qquad =  \sum_{j=1}^m r_{i,j} v^{(j)} 1_{\omega_j} &\mathrm{in}\ (0,T)\times \Omega,\\
\nabla \cdot y^{(i)} = 0 &  \mathrm{in}\ (0,T)\times \Omega,\\
y^{(i)} = 0 &\mathrm{on}\ (0,T)\times \partial \Omega,\\
y^{(i)}(0,\cdot) = y_0^{(i)} &\mathrm{in}\  \Omega.
\end{array}
\right.
\end{equation}
Here, $\Omega$ is a smooth open bounded domain of $\R^N$, $N=2,3$ and $\omega_j$, $1 \leq j \leq m$, are measurable subsets contained in $\Omega$ such that their $N$-dimensional Lebesgue measure is strictly positive, i.e. $|\omega_j|>0$. For every $1 \leq i \leq n$, $y^{(i)}(t,x) \in \R^{N}$ and $p^{(i)} \in \R$ are the velocity and the pressure of a fluid. These Stokes equations are coupled through two types of coupling matrices, i.e. a coupling for the Stokes operator $D=(d_{i,j})_{1 \leq i, j \leq n} \in \R^{n \times n}$ and  a matrix $Q=(q_{i,j})_{1 \leq i, j \leq n} \in \R^{n \times n}$ coupling zero order terms. The interpretation of this model is that each component $k\in \{1,\dots, N\}$, $(y^{(i)}_k, \partial_{x_k}p^{(i)})$ satisfies the same coupled system of $n$ parabolic equations. That is, for every $1 \leq i \leq n$, for every $1 \leq k \leq N$, we have
\begin{equation}
\label{eq:controlcoupledstokesk}
\left\{
\begin{array}{l l l}
\partial_t y_k^{(i)} + \sum_{j=1}^n d_{i,j} (-\Delta y_k^{(j)} + \partial_{x_k}p^{(j)}) + \sum_{j=1}^n q_{i,j} y_k^{(j)} \\
\qquad =  \sum_{j=1}^m r_{i,j} v_k^{(j)} 1_{\omega_j} &\mathrm{in}\ (0,T)\times \Omega,\\
 y_k^{(i)} = 0 &\mathrm{on}\ (0,T)\times \partial \Omega,\\
y_k^{(i)}(0,\cdot) = y_{0,k}^{(i)} &\mathrm{in}\  \Omega.
\end{array}
\right.
\end{equation}
Note that the couplings, materialized by $D$ and $R$, are constant in time and space. The controls are denoted by $v^{(j)}(t,x) \in \R^{N}$ for $1 \leq j \leq m$ and act on the system through the control matrix $R=(r_{i,j})_{1 \leq i, j \leq n} \in \R^{n \times m}$, that is also constant in time and space.

We aim to state a Kalman rank condition ensuring the null-controllability of \eqref{eq:controlcoupledstokes} and to provide an optimal cost of the null-controllability for 
\eqref{eq:controlcoupledstokes} when the condition is fulfilled. 

First of all, we write \eqref{eq:controlcoupledstokes} in an abstract way. We introduce 
\begin{equation}
\mathcal{H} := \{ y \in L^2(\Omega)^N\ ;\ \nabla \cdot y = 0\ \in \Omega,\ y \cdot \nu = 0\ \text{on}\ \partial\Omega\},
\end{equation}
where denote by $\nu$ the unit outward normal vector field on $\partial \Omega$. The Stokes operator is defined as follows
\begin{equation}
\mathbb A := - \mathbb P \Delta,\ \mathcal{D}(\mathbb A) := \{ y \in [H^2(\Omega) \cap H_0^1(\Omega)]^N\ ;\ \nabla \cdot y = 0\ \in \Omega\},
\end{equation}
where $\mathbb P : L^2(\Omega)^N \to \mathcal H$ is the Leray projector, see \cite{Foiasetal01}. For $1 \leq j \leq m$, let us denote
\begin{equation}
\mathcal U_j:= L^2(\omega_j)^N,
\end{equation}
and the associated control operator $\mathbb B_j \in \mathcal{L}(\mathcal U_j, \mathcal H)$
\begin{equation}
\mathbb B_j v^{(j)} := \mathbb P \left(1_{\omega_j} \sum_{i=1}^N v_i^{(j)} e^{(i)}\right),\ \text{for}\ v^{(j)} = (v_1^{(j)}, \dots, v_N^{(j)}) \in \mathcal U_j
\end{equation}
{\color{black} where $e^{(i)}, i=1,\dots, N$ represents the canonical basis in $\R^N$.}
Let us finally denote 
\begin{equation}
\mathcal U = \mathcal U_1 \times \dots \times \mathcal U_m,
\end{equation}
and the full control operator $\mathbb B \in \mathcal L (\mathcal U, \mathcal H^m)$
\begin{equation}
\mathbb B v := \begin{pmatrix} \mathbb B_1 v^{(1)} \\ \cdots \\ \mathbb B_m v^{(m)}\end{pmatrix}\qquad \forall v = (v^{(1)}, \dots, v^{(m)}) \in \mathcal U.
\end{equation}
We assume the following condition on the matrix $D$, there exists $c>0$ such that
\begin{equation}
\label{eq:dpositive}
\langle D X, X \rangle_{\R^n} \geq c |X|^2,\qquad \forall X \in \R^{n \times n}.
\end{equation}

We can now consider the operator of the coupled system \eqref{eq:controlcoupledstokes}, 
\begin{equation}
\mathbb L := D \mathbb A + Q,\ \mathcal{D}(\mathbb L) := \mathcal{D}(\mathbb A)^n,
\end{equation}
that is for $y = (y^{(1)}, \dots, y^{(n)}) \in \mathcal{D}(\mathbb A)^n$,
\begin{equation}
\mathbb L y := \left(\sum_{j=1}^n d_{i,j} \mathbb A y^{(j)} + \sum_{j=1}^n q_{i,j} y^{(j)}\right)_{i \in \{1, \dots, n\}}.
\end{equation}
Denote $\mathcal U =\mathcal U_1 \times \dots \times \mathcal U_m$, the control operator is $R \mathbb B \in \mathcal L (\mathcal U, \mathcal H^n)$, for $v = (v^{(1)}, \dots, v^{(n)}) \in\mathcal U$,
\begin{equation}
R \mathbb B v := \left(\sum_{j=1}^m r_{i,j} \mathbb B_j y^{(j)} \right)_{i \in \{1, \dots, n\}}
\end{equation}
Then, the system \eqref{eq:controlcoupledstokes} is equivalent to
\begin{equation}
\label{eq:controlform}
y' + \mathbb{L} y = R \mathbb B v,\ y(0) = y_0,
\end{equation}
with
\begin{equation}
y_0 = (y_0^{(1)}, \dots, y_0^{(n)}),\quad y_0^{(i)}\in \R^N.
\end{equation}

We  introduce now the Kalman operator
\begin{equation}
\label{eq:Kalmanop}
\mathbb K : \mathcal H^m \times \mathcal{D}(\mathbb A)^m \times \dots \times \mathcal{D}(\mathbb A^{n-1})^m \to \mathcal{H}^n,\ w = (w^{(1)}, \dots, w^{(n)}) \mapsto \sum_{i=1}^n \mathbb L^{i-1} R w^{(i)}.
\end{equation}
In the above definition, note that $w^{(i)} = (w_1^{(i)}, \dots, w_m^{(i)}) \in \mathcal{D}(\mathbb A^{i-1})^m $ so that
\begin{equation}
R w^{(i)} = \left(\sum_{j=1}^m r_{k,j} w_j^{(i)}\right)_{k \in \{1, \dots, n\}} \in \mathcal{D}(\mathbb A^{i-1})^n = \mathcal{D}(\mathbb L^{i-1}),
\end{equation}
and thus the Kalman operator is well-defined. The adjoint of $\mathbb K$ is given by
\begin{equation}
\mathbb K^{*} \varphi = \left[R^{*} \varphi, R^{*} \mathbb L^{*} \varphi, \dots, R^{*} \mathbb{L^{*}}^{n-1} \varphi \right].
\end{equation}
\begin{definition}
Let $T>0$. The system \eqref{eq:controlform} is null-controllable at time $T>0$ if for any $y_0 \in \mathcal{H}^n$, there exists a control $v \in L^2(0,T;\mathcal U)^m$ such that the corresponding solution $y$ of \eqref{eq:controlform} satisfies $y(T, \cdot) = 0$.
\end{definition}

The main result is the following one.
\begin{theorem}
\label{th:mainresult1}
System \eqref{eq:controlform} is null-controllable at time $T>0$ if and only if 
\begin{equation}
\label{eq:kalman}
\ker \mathbb K^{*} = \{0\}.
\end{equation}
Moreover, if \eqref{eq:kalman} holds, then there exists $C=C(\Omega,\omega_1, \dots, \omega_m,D,Q,R)>0$ such that for every $T>0$, for any $y_0 \in \mathcal{H}^n$, there exists a control $v \in L^2(0,T;\mathcal U)$ satisfying
\begin{equation}
\label{eq:controlcost}
\norme{v}_{L^2(0,T;\mathcal U)} \leq C e^{C/T} \norme{y_0}_{ \mathcal{H}^n},
\end{equation}
such that the corresponding solution $y$ of \eqref{eq:controlform} satisfies $y(T, \cdot) = 0$.
\end{theorem}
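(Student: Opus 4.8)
The plan is to pass, via the Hilbert Uniqueness Method, from the controllability statement to an observability inequality for the adjoint system, and then to prove the latter by adapting the Lebeau--Robbiano strategy in the form developed by Lissy and Zuazua for coupled systems. Throughout I write $\mathbb A \phi_k = \lambda_k \phi_k$ for an orthonormal eigenbasis $(\phi_k)$ of the self-adjoint Stokes operator $\mathbb A$, with $0<\lambda_1\le \lambda_2 \le \cdots \to \infty$, and I denote by $E_\mu = \mathrm{span}\{\phi_k : \lambda_k \le \mu\}$ the associated low-frequency spaces. The adjoint $\mathbb L^{*} = D^{T}\mathbb A + Q^{T}$ then acts on each eigenspace $\mathrm{span}(\phi_k)\otimes \R^n$ as the $n\times n$ matrix $(\lambda_k D + Q)^{T}$, and $R^{*}$ acts as multiplication by $R^{T}$, so the whole problem decouples, frequency by frequency, into finite-dimensional blocks carrying the full spatial information through the $\phi_k$.

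\textbf{Necessity.} I would first show that null-controllability forces \eqref{eq:kalman}. By the Cayley--Hamilton theorem, $\ker \mathbb K^{*} = \{0\}$ is equivalent to the finite-dimensional Kalman condition for every pair $\bigl((\lambda_k D + Q)^{T}, R^{T}\bigr)$. If this fails for some $\lambda_{k_0}$, there is $0\neq c\in\R^n$ with $R^{T}\bigl((\lambda_{k_0}D+Q)^{T}\bigr)^{j}c = 0$ for all $j$, hence $R^{T}e^{-t(\lambda_{k_0}D+Q)^{T}}c = 0$ for all $t$. The adjoint trajectory issued from $\varphi_T = \phi_{k_0}\otimes c$ then satisfies $R^{*}\varphi\equiv 0$, so $\mathbb B^{*}R^{*}\varphi\equiv 0$ while $\varphi\not\equiv 0$, which contradicts the observability inequality dual to null-controllability.

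\textbf{Sufficiency and cost.} Assuming \eqref{eq:kalman}, the goal is the observability inequality $\norme{\varphi(0)}_{\mathcal H^n}^2 \le C e^{C/T}\int_0^T \norme{\mathbb B^{*}R^{*}\varphi}_{\mathcal U}^2\,dt$, which by duality produces a control obeying \eqref{eq:controlcost}. The engine is the spectral inequality of Chaves-Silva--Souza--Zhang: for every $\mu>0$, every $u\in E_\mu$ and every $j$, $\norme{u}_{L^2(\Omega)}^2 \le C e^{C\sqrt\mu}\norme{u}_{L^2(\omega_j)}^2$. The step I expect to be the main obstacle is the \emph{finite-dimensional block observability estimate}: projecting the adjoint trajectory onto $E_\mu^n$, one must reconstruct the whole low-frequency state from the partial observation $\mathbb B^{*}R^{*}\varphi$ over a short time interval, with a constant of the form $C e^{C\sqrt\mu}$. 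This demands a \emph{quantitative} Kalman observability estimate for the non-normal matrices $(\lambda_k D + Q)^{T}$, carefully tracking the polynomial-in-$\lambda_k$ growth arising both from the entries (which grow linearly in $\lambda_k$) and from the non-orthogonality of their spectral projectors, and then weaving it together with the spectral inequality so as to convert the spatial observation on $\omega_j$ into control of the entire component in $E_\mu^n$. This intertwining is exactly what the Lissy--Zuazua machinery supplies, and keeping the resulting constant at the level $C e^{C\sqrt\mu}$ is what ultimately yields the \emph{sharp} cost rather than a worse one.

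\textbf{Conclusion via Lebeau--Robbiano.} Finally I would run the dissipation/iteration argument. The coercivity \eqref{eq:dpositive} guarantees that the eigenvalues of $\lambda_k D + Q$ have real part $\gtrsim c\lambda_k - \norme{Q}$, so $\mathbb L$ generates an analytic semigroup whose high-frequency part on $E_\mu^{\perp}$ decays like $e^{-c\mu t}$, the transient non-normal growth being only polynomial in the frequency and thus absorbed by this exponential. Splitting $[0,T]$ into the usual dyadic family of active and passive subintervals, controlling $E_{\mu_p}^n$ on the active intervals at cost $e^{C\sqrt{\mu_p}}$ and letting the high-frequency tail dissipate on the passive ones, the resulting telescoping series converges and delivers the observability constant $Ce^{C/T}$. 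By duality this gives a control satisfying $\norme{v}_{L^2(0,T;\mathcal U)} \le C e^{C/T}\norme{y_0}_{\mathcal H^n}$, which is precisely \eqref{eq:controlcost}, completing the proof.
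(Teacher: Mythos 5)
Your proposal follows essentially the same route as the paper: necessity via Cayley--Hamilton and an explicit adjoint trajectory supported on a single Stokes eigenfunction, and sufficiency by combining the Chaves-Silva--Souza--Zhang spectral inequality with the quantitative finite-dimensional Kalman observability estimate of Lissy--Zuazua (frequency-by-frequency decoupling into $n\times n$ blocks $(\gamma_k D+Q)^{T}$) and a dyadic Lebeau--Robbiano iteration tracking the $Ce^{C\sqrt\mu}$ constants. The only minor difference is in the high-frequency dissipation lemma, where you invoke the real parts of the eigenvalues of $\gamma_k D+Q$ plus control of the non-normal transient, whereas the paper gets the decay $e^{-\Gamma t}$ directly and more cleanly from the energy identity $\langle(\gamma_k D+Q)a,a\rangle\geq(c\gamma_k-\norme{Q})|a|^2$ furnished by the coercivity assumption \eqref{eq:dpositive}, with no transient to absorb.
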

Theorem \ref{th:mainresult1} is an extension of the results in \cite{TdTWZ24}  {\color{black} in the following sense:}
\begin{enumerate}
\item The coupling part on the Stokes operator, materialized by the matrix $D$, is not a priori diagonalizable and only satisfies the positive-definite assumption \eqref{eq:dpositive}. This contrasts with \cite{TdTWZ24} where the authors require that $D$ is a positive-definite diagonal matrix.
\item For $1 \leq j \leq m$, the controls $v^{(j)}$ are localized in different parts $\omega_j$ of the domain $\Omega$ and we only require that $|\omega_j| >0$. This type of assumption also differs from \cite{TdTWZ24} where the controls $v^{(j)}$ are all localized in the same nonempty open subset $\omega$.
\item The control cost estimate \eqref{eq:controlcost} is sharper than the one obtained in \cite{TdTWZ24} where one can deduce from the proof a control cost of the form $C e^{C/T^{11}}$. 
\item On the other hand, in our paper, we require all the  components of the controls, that is each $v^{(j)} \in \R^N$ for $1 \leq j \leq m$ while in \cite{TdTWZ24}, they assume that $v^{(j)} \in \R^{N-1} \times \{0\}$. One way to overcome this difficulty should be to get a spectral estimate for the Stokes operator with an observation on only the first $N-1$ components, see \Cref{rmk:openproblemstokesSpectral}.
\end{enumerate}

The necessity part of \Cref{th:mainresult1} is straightforward and can be reduced to a finite-dimensional argument. The sufficient part of \Cref{th:mainresult1} is more difficult. It is based on the spectral estimate for the Stokes operator of \cite{CSSZ20} and an adaptation of Lissy and Zuazua's strategy \cite{LZ19}   to coupled Stokes systems. Note that during the proof, we take special attention to the cost estimate of the control to get \eqref{eq:controlcost}.\medskip

The paper is organized as follows. In \Cref{sec:necessity} we prove the necessity part of \Cref{th:mainresult1} based on spectral properties of the Kalman operator \eqref{eq:Kalmanop} and we present some explicit examples when the Kalman condition \eqref{eq:kalman} holds. \Cref{sec:sufficiency} is devoted to prove  the sufficient part of \Cref{th:mainresult1} and to derive the optimal cost of null-controllability, i.e. \eqref{eq:controlcost}.\\

\textbf{Acknowledgements.} The firs author is partially supported by the Project TRECOS ANR-20-CE40-0009 funded by the ANR (2021--2024).

\section{Necessity of the Kalman rank condition}
\label{sec:necessity}

It is well-known that the Stokes operator $\mathbb A$ is a positive self-adjoint operator with compact resolvent in $\mathcal H$. In particular, its spectrum is discrete and composed of positive eigenvalues $(\gamma_p)_{p \geq 1}$. Let us consider an orthonormal basis $(\phi_p)_{p \geq 1}$ of $\mathcal H$ composed by eigenvectors of $\mathbb A$ associated with the eigenvalues $(\gamma_p)_{p \geq 1}$. For every $k \geq 1$, $\phi_k$ together with the associated pressure $p_k$ satisfy the following Stokes system
\begin{equation}
\label{eq:eigenfunctionstokes}
\left\{
\begin{array}{l l l}
- \Delta \phi_k + \nabla p_k = \gamma_k \phi_k &\mathrm{in}\ \Omega,\\
\nabla \cdot \phi_k = 0 &  \mathrm{in}\ \Omega,\\
\phi_k = 0 &\mathrm{on}\ \partial \Omega.
\end{array}
\right.
\end{equation}
We define for all $p \geq 1$ the matrix
\begin{equation}
\label{eq:Kp}
K_p := \left[ R | (\gamma_p D + Q) R | \cdots | (\gamma_p D + Q)^{n-1} R \right] \in \R^{n \times nm},
\end{equation}
and its adjoint
\begin{equation}
K_p^* = \begin{pmatrix} R^* \\  R^* (\gamma_p D^* + Q^*) \\ \vdots \\ R^* (\gamma_p D^* + Q^*)^{n-1} \end{pmatrix}.
\end{equation}
Then we have the following result that is a straightforward adaptation of \cite[Proposition 2.2]{AKBDGB09}   for coupled heat equations.
\begin{proposition}
\label{prop:equivalencekalman}
We have
\begin{equation}
\ker \mathbb K^* = \{0\} \Leftrightarrow \forall p \geq 1,\ \ker K_p^* = \{0\} \Leftrightarrow \forall p \geq 1,\ \mathrm{rank}\  K_p = n.
\end{equation}
\end{proposition}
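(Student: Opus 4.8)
The plan is to use the spectral decomposition of the Stokes operator $\mathbb A$ to reduce the infinite-dimensional kernel condition on $\mathbb K^*$ to the family of finite-dimensional conditions carried by the matrices $K_p^*$. The guiding observation is that $D$ and $Q$ act only on the component index $i \in \{1,\dots,n\}$, whereas $\mathbb A$ acts on the spatial variable; consequently these two actions commute and the eigenspaces of $\mathbb A$ are left invariant by $\mathbb L = D\mathbb A + Q$, by its adjoint $\mathbb L^* = D^*\mathbb A + Q^*$ (using that $\mathbb A$ is self-adjoint), and by $R^*$.

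Concretely, I would introduce for each $p \geq 1$ the subspace $\mathcal H_p \subset \mathcal H^n$ of those $\varphi$ all of whose $n$ components are proportional to $\phi_p$, identified with $\R^n$ via the coefficient vector. Since $\mathbb A \phi_p = \gamma_p \phi_p$, a direct computation shows that on $\mathcal H_p$ the operator $\mathbb L^*$ acts exactly as the matrix $\gamma_p D^* + Q^*$. Expanding a generic $\varphi \in \mathcal D(\mathbb A^{n-1})^n$ as $\varphi = \sum_{p \geq 1} \tilde\varphi_p\,\phi_p$ with $\tilde\varphi_p \in \R^n$, the invariance yields, for each $1 \leq i \leq n$,
\begin{equation}
R^*(\mathbb L^*)^{i-1}\varphi = \sum_{p \geq 1} \bigl(R^*(\gamma_p D^* + Q^*)^{i-1}\tilde\varphi_p\bigr)\,\phi_p .
\end{equation}
By orthonormality of $(\phi_p)_{p\geq1}$, the vanishing of $\mathbb K^*\varphi$, i.e.\ of $R^*(\mathbb L^*)^{i-1}\varphi$ for all $i$, is equivalent to $R^*(\gamma_p D^* + Q^*)^{i-1}\tilde\varphi_p = 0$ for every $p$ and every $i$. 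Stacking these $n$ vector identities is precisely the single condition $K_p^*\tilde\varphi_p = 0$. Hence $\mathbb K^*\varphi = 0$ if and only if $\tilde\varphi_p \in \ker K_p^*$ for all $p$.

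The first equivalence then follows immediately. If $\ker K_p^* = \{0\}$ for every $p$, each $\tilde\varphi_p$ vanishes and so $\varphi = 0$, giving $\ker\mathbb K^* = \{0\}$. Conversely, if $\ker K_{p_0}^* \neq \{0\}$ for some $p_0$, I would pick a nonzero vector therein and place it on the single mode $\phi_{p_0}$; since $\phi_{p_0}$ is a smooth eigenfunction it lies in $\mathcal D(\mathbb A^k)$ for all $k$, so the resulting $\varphi$ is a legitimate nonzero element of $\ker\mathbb K^*$. The second equivalence is elementary linear algebra: $K_p^* \in \R^{nm \times n}$ is injective if and only if $\mathrm{rank}\,K_p^* = n$, and since $\mathrm{rank}\,K_p^* = \mathrm{rank}\,K_p$ and $K_p \in \R^{n \times nm}$, this is exactly the full–row–rank condition $\mathrm{rank}\,K_p = n$.

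The only genuinely delicate point is the bookkeeping in the first step: one must check that $R^*$ and the powers of $\mathbb L^*$ act blockwise, mode by mode, on the eigenspaces, and that the series manipulation leading to the displayed identity is justified. Both reduce to the commutation of the constant component matrices with the scalar action of $\mathbb A$ on each $\phi_p$, together with the orthonormality of the eigenbasis; everything else is routine.
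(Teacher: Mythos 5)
Your proof is correct and follows exactly the route the paper intends: the paper gives no proof of this proposition, deferring to the adaptation of \cite[Proposition 2.2]{AKBDGB09}, and your argument supplies precisely that standard reasoning — expand $\varphi$ in the orthonormal eigenbasis of $\mathbb A$, observe that $\mathbb L^*$ acts on the $\phi_p$-mode as the matrix $\gamma_p D^* + Q^*$ so that $\mathbb K^*$ block-diagonalizes into the family $(K_p^*)_{p\geq 1}$, and conclude with the elementary rank/injectivity equivalence for the finite matrices. The points you flag as delicate (the mode-by-mode action of $R^*$ and of powers of $\mathbb L^*$, and the convergence of the series for $\varphi \in \mathcal D(\mathbb A^{n-1})^n$) are indeed the only things to verify, and they hold for the reasons you give.
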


In order to prove the necessity part of \Cref{th:mainresult1}, we consider the adjoint system of \eqref{eq:controlcoupledstokes}. That is, for $1 \leq i \leq n$, 
\begin{equation}
\label{eq:adjointcoupledstokes}
\left\{
\begin{array}{l l l}
- \partial_t \varphi^{(i)} + \sum_{j=1}^n d_{j,i}(- \Delta \varphi^{(j)} + \nabla \pi^{(i)} )+ \sum_{j=1}^n q_{j,i} \varphi^{(j)} = 0 &\mathrm{in}\ (0,T)\times \Omega,\\
\nabla \cdot \varphi^{(i)} = 0 &  \mathrm{in}\ (0,T)\times \Omega,\\
\varphi^{(i)} = 0 &\mathrm{on}\ (0,T)\times \partial \Omega,\\
\varphi^{(i)}(T,\cdot) = \varphi_0^{(i)} &\mathrm{in}\  \Omega.
\end{array}
\right.
\end{equation}
or in abstract way 
\begin{equation}
\label{eq:adjointform}
\varphi' + \mathcal{L}^* \varphi = 0,\ \varphi(T) = \varphi_0.
\end{equation}
The adjoint of the control operator is $\mathbb B^* R^* \in \mathcal{L}(\mathcal H^n,\mathcal U)$. More precisely, if $\varphi = (\varphi^{(1)}, \dots, \varphi^{(n)}) \in \mathcal H^n$ and if we denote by $\varphi_k^{(i)}$ the components of $\varphi^{(i)} \in \mathcal H$, then
\begin{multline}
\mathbb B^* R^* \varphi = \left((\mathbb B^* R^* \varphi)^{(1)}, \dots, (\mathbb B^* R^*\varphi)^{(m)}\right) \in \mathcal{U}, \\ (\mathbb B^* R^* \varphi)^{(i)} = \left(\sum_{j=1}^n r_{j,i} \sum_{k=1}^{N} \varphi_k^{(j)} e^{(k)} 1_{\omega_i} \right) \ i = 1, \dots, m.
\end{multline}

By the well-known Hilbert Uniqueness Method, see for instance \cite[Theorem 2.44]{Cor07}, the null-controllability of \eqref{eq:controlform} at time $T>0$ is equivalent to the following observability inequality. There exists $C>0$ such that for every $\varphi_0 \in \mathcal H^n$, the solution $\varphi$ of \eqref{eq:adjointcoupledstokes} satisfies
\begin{equation}
\label{eq:obs}
\int_{\Omega} |\varphi(0,x)|^2 dx \leq C \int_{0}^T \sum_{i=1}^m \int_{\omega_i} | (\mathbb B^* R^* \varphi)^{(i)} (t,x)|^2 dx dt.
\end{equation}
As a consequence, the sufficient part of our main result would be a consequence of the following result.
\begin{lemma}
If {\color{black}$\ker \mathbb K^* \not= \{0\} $, i.e.} \eqref{eq:kalman} does not hold, then there exists $\varphi_0 \in \mathcal H^n$,  such that the solution $\varphi$ of \eqref{eq:adjointcoupledstokes} satisfies $$\varphi(0, \cdot) \neq 0 \ \text{and}\ R^* \varphi = 0 \ \text{in}\ (0,T)\times\Omega.$$
\end{lemma}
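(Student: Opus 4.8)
The plan is to reduce the problem to a single eigenmode of the Stokes operator, where the adjoint system collapses to a finite-dimensional ODE. By \Cref{prop:equivalencekalman}, the assumption $\ker \mathbb K^* \neq \{0\}$ yields an integer $p \geq 1$ for which $\ker K_p^* \neq \{0\}$, equivalently $\mathrm{rank}\, K_p < n$. Since $K_p^*$ is a real matrix, I pick a nonzero vector $\xi = (\xi_1, \dots, \xi_n) \in \R^n$ in its kernel, so that
\begin{equation}
R^* (\gamma_p D^* + Q^*)^\ell \xi = 0, \qquad \ell = 0, 1, \dots, n-1.
\end{equation}

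Next I build $\varphi$ by superposing the $n$ components along the $p$-th Stokes eigenfunction $\phi_p$. Setting $M := \gamma_p D^* + Q^* \in \R^{n\times n}$, I define $z(t) := e^{(T-t)M}\xi \in \R^n$, the solution of the backward ODE $\dot z + M z = 0$, $z(T) = \xi$, and then propose
\begin{equation}
\varphi(t, \cdot) := z(t)\, \phi_p = \big(z_1(t)\phi_p, \dots, z_n(t)\phi_p\big), \qquad \varphi_0 := \xi\, \phi_p \in \mathcal H^n.
\end{equation}
Using $\mathbb A \phi_p = \gamma_p \phi_p$ from \eqref{eq:eigenfunctionstokes} and the identity $\mathbb L^* = D^* \mathbb A + Q^*$, one verifies componentwise that $\mathbb L^* \varphi = (Mz)\phi_p$, whence $\varphi' + \mathbb L^* \varphi = (\dot z + Mz)\phi_p = 0$. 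Thus $\varphi$ solves the adjoint system \eqref{eq:adjointcoupledstokes} with terminal datum $\varphi_0$, the associated pressure being $z(t)\,p_p$, where $p_p$ is the pressure attached to $\phi_p$ in \eqref{eq:eigenfunctionstokes}.

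It remains to check the two required properties. For the observation I expand the matrix exponential,
\begin{equation}
R^* z(t) = \sum_{k \geq 0} \frac{(T-t)^k}{k!}\, R^* M^k \xi .
\end{equation}
The hypothesis annihilates the terms $k \leq n-1$; to control $k \geq n$, I invoke the Cayley--Hamilton theorem, which writes $M^n$ as a linear combination of $I, M, \dots, M^{n-1}$, and conclude by induction that $R^* M^k \xi = 0$ for every $k \geq 0$. Hence $R^* z(t) \equiv 0$, and since $R^* \varphi(t,\cdot) = (R^* z(t))\,\phi_p$, we obtain $R^* \varphi = 0$ in $(0,T)\times \Omega$. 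For the nontriviality, $\varphi(0,\cdot) = (e^{TM}\xi)\,\phi_p$, and since $e^{TM}$ is invertible with $\xi \neq 0$ and $\phi_p \neq 0$, we get $\varphi(0,\cdot)\neq 0$.

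The only genuinely delicate point is the passage from the finitely many vanishing relations ($\ell \leq n-1$) to the vanishing of the entire exponential series, which is exactly where Cayley--Hamilton is essential; the rest is a direct verification on the finite-dimensional $\mathbb L^*$-invariant subspace spanned by the components of $\phi_p$.
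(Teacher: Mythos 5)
Your argument is correct and is essentially the paper's own proof: reduce to the single eigenmode $\phi_p$ via \Cref{prop:equivalencekalman}, propagate a nonzero kernel vector of $K_p^*$ by the matrix exponential of $M=\gamma_p D^* + Q^*$, kill the observation term by Cayley--Hamilton (which the paper states in one line and you rightly expand into the induction giving $R^*M^k\xi=0$ for all $k\geq 0$), and use invertibility of the exponential for the nontriviality at $t=0$. The only (harmless) discrepancy is the sign in your ODE for $z$: as written, \eqref{eq:adjointcoupledstokes} reads $-\partial_t\varphi+\mathbb L^*\varphi=0$, so $z$ should solve $\dot z = Mz$, i.e. $z(t)=e^{-(T-t)M}\xi$ as in the paper (the paper's abstract form \eqref{eq:adjointform} carries the opposite sign, which is where your convention comes from); since $R^*M^k\xi=0$ for every $k$ and the exponential is invertible under either convention, the conclusion is unaffected.
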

\begin{proof}
Assume that {\color{black}$\ker \mathbb K^* \not= \{0\} $} then from \Cref{prop:equivalencekalman}, we obtain that there exist $p_0 \geq 1$ and $z_0 \in \ker K_{p_0}^* \subset \R^n$, $z_0 \neq 0$. Then, from the Cayley-Hamilton theorem, we deduce that
\begin{equation}
R^* e^{-(\gamma_{p_0} D^* + Q^*)(T-t) } z_0 = 0 \qquad t \in \R.
\end{equation}
For $z = (z^{(1)}, \dots, z^{(n)}) \in \R^n$ and $\phi \in \mathcal H$, we set
$$ z \phi := (z^{(1)}\phi, \dots, z^{(n)}\phi) \in \mathcal H^n.$$
{\color{black}Let $\phi_{p_0}$ be the corresponding eigenfunction with eigenvalue $\gamma_{p_0}$ to \eqref{eq:eigenfunctionstokes}}.
We can check that
$$ \varphi(t):= \left(e^{-(\gamma_{p_0} D^* + Q^*)(T-t)} z_0 \right) \phi_{p_0}\qquad t \geq 0,$$
 is a solution of \eqref{eq:adjointcoupledstokes} with $$\varphi(T) = z_0 \phi_{p_0} \neq 0.$$ Therefore, we have
 $$ \varphi(0) = \left(e^{-(\gamma_{p_0} D^* + Q^*)T} z_0 \right) \phi_{p_0} \neq 0\ \text{in}\ \Omega,$$
because  $e^{-(\gamma_{p_0} D^* + Q^*)T}$ is invertible, $z_0 \neq 0$ and $\phi_{p_0} \neq 0\ \text{in}\ \Omega$, and
$$ R^* \varphi(t) = \left(R^* e^{-(\gamma_{p_0} D^* + Q^*)(T-t)} z_0 \right) \phi_{p_0} = 0\ \text{in}\ (0,T)\times\Omega.$$
This leads to the conclusion of the proof.
\end{proof}

We finish this section by giving some examples where one can check explicitly if the Kalman rank condition \eqref{eq:kalman} holds. In this part, we will crucially use \Cref{prop:equivalencekalman}. Note that these examples are borrowed from \cite{TdTWZ24}, and $D$ is a diagonal matrix.\medskip

{\bf Examples.}

\noindent{\textbf{Case 1}:}
Assume  that $ D$ is diagonal and that all the viscosities are the same, that is, for all $i\in \{1,\ldots,n\}$, $d_{i,i}=d>0.
$
Then, we deduce from \cite{AKBDGB09} that \eqref{eq:kalman} is equivalent to
\begin{equation}\label{di11:51}
\rank \left[ R \mid QR \mid \cdots \mid Q^{n-1} R \right]=n,
\end{equation}
that is the original Kalman condition for the matrices $Q$ and $R$. In particular, 
\begin{equation*}
\left\{\begin{array}{lll}
	\displaystyle \partial_t y^{(i)}-d \Delta y^{(i)}+\nabla p^{(i)}
	+\sum_{j=1}^n q_{i,j} y^{(j)}
	=\sum_{j=1}^m r_{i,j} v^{(j)} 1_{\omega_i}  &\text{in} \ {(0,T)\times \Omega}, &(1\leq i\leq n)\\
	\nabla\cdot y^{(i)}=0 &\text{in }{(0,T)\times \Omega}, &(1\leq i\leq n)\\
	y^{(i)}=0 &\text{on }{(0,T)\times \partial\Omega}, &(1\leq i\leq n)\\
	y^{(i)}(0,\cdot)=y_0^{(i)} &\text{in }\Omega, &(1\leq i\leq n)
\end{array}\right.
\end{equation*}
is null-controllable if and only if the finite-dimensional linear system
\begin{equation*}
\left\{\begin{array}{ll}
	\displaystyle \frac{d}{dt} Y^{(i)}+\sum_{j=1}^n q_{i,j} Y^{(j)}
	=\sum_{j=1}^m r_{i,j} V^{(j)}  &\text{in} \ (0,T), \ (1\leq i\leq n)
	\\
	Y^{(i)}(0,\cdot)=Y_0^{(i)}\in \mathbb{R} & (1\leq i\leq n),
\end{array}\right.
\end{equation*}
is controllable.

\noindent{\textbf{Case 2}: simultaneous controllability.}
Assume that also $R$ is diagonal  and
$$
Q=0, \quad m=n, \quad R=(r_{i,i})_{i=1,\ldots,n}.
$$ 
To simplify notation we write $r_{i,i}:=r_i$, $d_{i,i}=d_i$.
Then, \eqref{eq:kalman} can be written as
$$
K_p:=\begin{bmatrix}
r_1  & \left(\gamma_p d_1\right)r_1 & \cdots & \left(\gamma_p d_1\right)^{n-1} r_1\\
r_2  & \left(\gamma_p d_2\right)r_2 & \cdots & \left(\gamma_p d_2\right)^{n-1} r_2\\
\vdots & \vdots & \ddots & \vdots \\
r_n  & \left(\gamma_p d_n\right)r_n & \cdots & \left(\gamma_p d_n\right)^{n-1} r_n
\end{bmatrix}
\in \mathcal{M}_{n}(\mathbb{R})
$$
In particular, using Vandermonde matrices, we see that \eqref{eq:kalman} is equivalent to
$$
\forall i\in \{1,\ldots,n\}, \ r_i\neq 0, 
\quad d_i\neq d_j \quad \text{if} \ i\neq j.
$$
In particular, taking $r_i=1$ for all $i$, we obtain that the system 
\begin{equation*}
\left\{\begin{array}{lll}
	\displaystyle \partial_t y^{(i)}-d_i \Delta y^{(i)}+\nabla p^{(i)}
	=v^{(i)} 1_{\omega}  &\text{in} \ {(0,T)\times \Omega}, &(1\leq i\leq n)\\
	\nabla\cdot y^{(i)}=0 &\text{in }{(0,T)\times \Omega}, &(1\leq i\leq n)\\
	y^{(i)}=0 &\text{on }{(0,T)\times \partial\Omega}, &(1\leq i\leq n)\\
	y^{(i)}(0,\cdot)=y_0^{(i)} &\text{in }\Omega, &(1\leq i\leq n)
\end{array}\right.
\end{equation*}
is null-controllable  provided all the viscosities $d_i$ are distinct.
The above system is {\it simultaneously} null-controllable in any time $T>0$.  Observe that in particular, since we are acting on each system we can take different control supports $\omega_i$ instead of $\omega$.

\noindent{\textbf{Case 3}: cascade systems.}  
Assume
$$
q_{i,j}=0 \quad \text{if} \ i\geq j+2, 
\quad
q_{i,i-1}\neq 0 \quad (2\leq i\leq n), \quad R=(\delta_{1,i})_{i=1,\ldots,n}
$$ 
where $\delta_{i,j}$ is the Kronecker delta.
Then a proof by induction {\color{black} on $n$ (the number of coupled equations)},  yields that $K_p$ is an upper triangular matrix of the form
$$
K_p:=\begin{bmatrix}
1  &  & &  & \\
0  & q_{2,1}& &  & \\
\vdots  & 0 & q_{3,2} q_{2,1}&   &\\
\vdots & \vdots & 0 &\ddots & \\
\vdots & \vdots & \vdots &  &\\
0  & 0 & 0 &\cdots   & \displaystyle\prod_{i=2}^n q_{i,i-1}
\end{bmatrix}
\in \mathcal{M}_{n}(\mathbb{R}),
$$
By hypothesis, the elements of diagonal are non zero.

\section{Sufficiency of the Kalman rank condition and optimal cost}
\label{sec:sufficiency}

The goal of this part is devoted to prove that if the Kalman rank condition \eqref{eq:kalman} holds then \eqref{eq:controlcoupledstokes} is null-controllable at every time $T>0$ and give an upper bound on the control cost.

\subsection{Well-posedness of the coupled Stokes system}

\begin{lemma}
The operator $D \mathbb A$ is a generator of an analytic semigroup on $\mathcal H^n$ that we denote by $(e^{t D \mathbb A})_{t \geq 0}$, satisfying for some $\lambda > 0$,
\begin{equation}
\norme{e^{t D \mathbb A}}_{\mathcal{L}( \mathcal H^n)} \leq e^{- \lambda t}\qquad \forall t \geq 0.
\end{equation}
\end{lemma}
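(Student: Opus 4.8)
The plan is to diagonalise in the spectral basis of $\mathbb A$ and thereby reduce the problem to uniform estimates on the finite-dimensional matrix exponentials $e^{-t\gamma_p D}$, $p \geq 1$. Since $\mathbb A$ is positive, self-adjoint with compact resolvent, $\mathcal H$ admits the orthonormal eigenbasis $(\phi_p)_{p\geq 1}$ from \eqref{eq:eigenfunctionstokes}, with $0 < \gamma_1 \leq \gamma_2 \leq \cdots \to +\infty$. Writing $y = \sum_{p\geq 1} y_p\,\phi_p \in \mathcal H^n$ with $y_p \in \C^n$ and $\norme{y}_{\mathcal H^n}^2 = \sum_p \abs{y_p}^2$, the operator $D\mathbb A$ acts block-diagonally, as the matrix $\gamma_p D$ on each mode $\C^n \phi_p$. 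I would therefore \emph{define} the solution operator of $\partial_t y + D\mathbb A y = 0$ by $S(t)y := \sum_{p\geq 1} e^{-t\gamma_p D} y_p\,\phi_p$, denote it $(e^{tD\mathbb A})_{t\geq 0}$ as in the statement (its generator being $-D\mathbb A$), and check three things: boundedness with the exponential decay, strong continuity together with the semigroup property, and analyticity. The generator is then identified as $-D\mathbb A$ with domain $\mathcal D(\mathbb A)^n$ by differentiating $S(t)y$ at $t=0$ on finite mode expansions and invoking closedness.

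The decay is the easy part and comes from the dissipativity encoded in \eqref{eq:dpositive}. That hypothesis says exactly that the symmetric part $(D+D^*)/2$ is $\geq cI$, equivalently $\mathrm{Re}\,\psc{Dx}{x} \geq c\abs{x}^2$ for every $x \in \C^n$ (for the Hermitian inner product, the real part of $\psc{Dx}{x}$ being $\psc{\tfrac{D+D^*}{2}x}{x}$). Hence, for fixed $p$ and $x_0 \in \C^n$, the function $g(t) := \abs{e^{-t\gamma_p D}x_0}^2$ satisfies $g'(t) = -2\gamma_p\,\mathrm{Re}\,\psc{D e^{-t\gamma_p D}x_0}{e^{-t\gamma_p D}x_0} \leq -2c\gamma_p\, g(t)$, so that $\abs{e^{-t\gamma_p D}x_0} \leq e^{-c\gamma_p t}\abs{x_0} \leq e^{-c\gamma_1 t}\abs{x_0}$. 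Summing over the modes gives $\norme{S(t)y}_{\mathcal H^n}^2 = \sum_p \abs{e^{-t\gamma_p D}y_p}^2 \leq e^{-2c\gamma_1 t}\norme{y}_{\mathcal H^n}^2$, i.e. the desired bound with $\lambda := c\gamma_1 > 0$. The semigroup property is immediate from $e^{-t\gamma_p D}e^{-s\gamma_p D} = e^{-(t+s)\gamma_p D}$, and strong continuity $S(t)y \to y$ as $t \to 0^+$ follows from the uniform bound $\norme{S(t)}_{\mathcal L(\mathcal H^n)} \leq 1$ and density of the finite mode expansions.

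Analyticity is the delicate point, and I would obtain it by extending $S$ to complex times through the numerical range of $D$; this is where I expect the real work to be, since $D$ need not be symmetric (hence not unitarily diagonalisable) and one cannot fall back on scalar exponentials, so the clean way to keep constants uniform in $p$ is a Lumer--Phillips/numerical-range argument rather than an eigenvalue computation. Because $W(D) := \set{\psc{Dx}{x} : x \in \C^n,\ \abs{x}=1}$ is bounded, $\abs{\psc{Dx}{x}} \leq \norme{D}_{\mathcal L(\C^n)}$, and contained in $\set{\mathrm{Re} \geq c}$ by \eqref{eq:dpositive}, it lies in the sector $\Sigma_{\theta_0} := \set{z \in \C : \abs{\arg z} \leq \theta_0}$ with $\cos\theta_0 = c/\norme{D}_{\mathcal L(\C^n)} \in (0,1]$, so $\theta_0 < \pi/2$. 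For $z$ in the sector $\set{\abs{\arg z} \leq \pi/2 - \theta_0}$ and any $p \geq 1$, the numerical range of $z\gamma_p D$ equals $z\gamma_p W(D)$, which lies in $\set{\mathrm{Re} \geq 0}$ (multiplying $\gamma_p W(D) \subset \Sigma_{\theta_0}$ by $z$ rotates it by at most $\pi/2 - \theta_0$); thus $-z\gamma_p D$ is dissipative and $\norme{e^{-z\gamma_p D}}_{\mathcal L(\C^n)} \leq 1$. Setting $S(z)y := \sum_p e^{-z\gamma_p D}y_p\,\phi_p$ then yields $\norme{S(z)}_{\mathcal L(\mathcal H^n)} \leq 1$ on this sector, and $z \mapsto S(z)y$ is holomorphic there because the partial sums are entire and converge locally uniformly. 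By the standard characterisation of analytic semigroups as those admitting a bounded holomorphic extension to a sector about the positive real axis, $-D\mathbb A$ generates an analytic semigroup, which together with the computations above completes the proof.
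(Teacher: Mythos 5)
Your proof is correct, but it takes a genuinely different route from the paper's. The paper does not touch the spectral decomposition at this stage: it defines $D \mathbb A$ through the sesquilinear form $a(\varphi,\psi)=\sum_{i,j} d_{i,j}\int_\Omega \nabla \varphi^{(j)}:\nabla\psi^{(i)}\,dx$, observes that \eqref{eq:dpositive} together with Poincar\'e's inequality makes this form coercive (and it is densely defined, closed and continuous), and invokes the generation theorem for sectorial forms from \cite{Ouh09}; the decay rate then comes for free from the coercivity constant. You instead diagonalise in the Stokes eigenbasis \eqref{eq:eigenfunctionstokes}, reduce everything to the matrix exponentials $e^{-z\gamma_p D}$, and control these uniformly in $p$ via the numerical range of $D$ --- which is indeed the right tool here, since \eqref{eq:dpositive} does not make $D$ symmetric and an eigenvalue-by-eigenvalue argument would not give uniform constants. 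Your route is more elementary (no sectorial-form machinery), yields the explicit decay rate $\lambda=c\gamma_1$ and an explicit angle $\tfrac{\pi}{2}-\arccos\left(c/\norme{D}_{\mathcal L(\C^n)}\right)$ for the sector of analyticity, and makes transparent the block-diagonal structure that the paper itself exploits later (the dissipation estimate of \Cref{prop:dissipationestimate} and the low-frequency reduction in \Cref{prop:obsLowfreq} are exactly this mode-by-mode computation). Its price is that it is wedded to $D$ being a constant matrix commuting with the spectral resolution of $\mathbb A$, whereas the form method would survive variable coefficients or more general couplings.

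One step deserves an extra line: identifying the generator. Differentiating $S(t)y$ at $t=0$ on finite mode expansions and using closedness of the generator $G$ only gives the inclusion $-D\mathbb A\subset G$ (finite sums of modes being a core for $\mathcal D(\mathbb A)^n$). To conclude $\mathcal D(G)=\mathcal D(\mathbb A)^n$ you should add that for some real $\lambda>0$ in the resolvent set of $G$ the operator $\lambda+D\mathbb A$ is already surjective from $\mathcal D(\mathbb A)^n$ onto $\mathcal H^n$ (checked mode by mode, since $\lambda+\gamma_p D$ is invertible with uniformly bounded inverse), so the inclusion cannot be strict. This is routine, but as written it is the only genuinely unfinished point.
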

\begin{proof}
Note that equivalently, $D \mathbb A$ can be defined by duality, by setting
\begin{equation}\label{eq:form0}
\left \langle D \mathbb A  \varphi , \psi \right\rangle_{\mathcal H}  =  D \int_\Omega \nabla \varphi:\nabla \psi\ dx \qquad \varphi, \psi \in \mathcal{D}( \mathbb A).
\end{equation}
By the positive-definite assumption on $D$, i.e. \eqref{eq:dpositive} and Poincaré's inequality, there exist $c_1,c_2 >0$ such that 
\begin{equation*}
\left \langle D \mathbb A  \varphi , \psi \right\rangle_{\mathcal H} 
\geq c_1 \norme{\nabla \varphi}_{L^2(\Omega)^N}^2 \geq c_2 \norme{\varphi}_{L^2(\Omega)^N}^2.
\end{equation*}
Thus the corresponding sesquilinear form associated to $D \mathbb A$ (this form is simply defined by the right hand side of \eqref{eq:form0}) is coercive on $\mathcal H$. We also readily check that this form is densely defined, closed and continuous on $\mathcal H$. By \cite[Theorem 1.52]{Ouh09},  $D \mathbb A$ generates an analytic semigroup on $\mathcal H$.
\end{proof}

By a perturbation argument (see, for instance, \cite[Section 3.2, Corollary 2.2]{Paz83}), we then obtain the following result.
\begin{lemma}
\label{lem:boundedsemigroup}
The operator $\mathbb L$ is a generator of an analytic semigroup on $\mathcal H^n$, denoted by $(e^{t \mathbb L})_{t \geq 0}$ satisfying for some $M >0$,
\begin{equation}
\norme{e^{t \mathbb L}}_{\mathcal{L}( \mathcal H^n)} \leq e^{M t}\qquad \forall t \geq 0.
\end{equation}
\end{lemma}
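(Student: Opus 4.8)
\textbf{Proof plan for \Cref{lem:boundedsemigroup}.}

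The plan is to obtain $\mathbb{L} = D\mathbb{A} + Q$ as a bounded perturbation of the generator $D\mathbb{A}$, whose analyticity and dissipativity were established in the previous lemma. The key observation is that $Q$ acts on $\mathcal{H}^n$ simply as multiplication by the constant matrix $Q \in \R^{n\times n}$, that is, $Qy = \bigl(\sum_{j=1}^n q_{i,j} y^{(j)}\bigr)_{i}$. Since this operation involves no derivatives, $Q$ is a bounded linear operator on $\mathcal{H}^n$ with $\norme{Q}_{\mathcal{L}(\mathcal{H}^n)} \leq \abs{Q}$, where $\abs{Q}$ denotes the operator norm of the matrix acting on $\R^n$ (tensored with the identity on $\mathcal{H}$). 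In particular, the domain $\mathcal{D}(\mathbb{L}) = \mathcal{D}(\mathbb{A})^n$ coincides with $\mathcal{D}(D\mathbb{A})$, so no domain issues arise.

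First I would invoke the bounded perturbation theorem for semigroup generators, exactly as cited in \cite[Section 3.2, Corollary 2.2]{Paz83}: if $A$ generates a $C_0$-semigroup $(S(t))_{t\geq 0}$ with $\norme{S(t)} \leq e^{\omega t}$, and $B$ is a bounded operator, then $A+B$ generates a $C_0$-semigroup $(T(t))_{t\geq 0}$ satisfying the bound $\norme{T(t)} \leq e^{(\omega + \norme{B})t}$. Applying this with $A = D\mathbb{A}$, for which the previous lemma gives $\norme{e^{tD\mathbb{A}}}_{\mathcal{L}(\mathcal{H}^n)} \leq e^{-\lambda t}$ (so $\omega = -\lambda$), and with $B = Q$, yields that $\mathbb{L}$ generates a $C_0$-semigroup with
\begin{equation*}
\norme{e^{t\mathbb{L}}}_{\mathcal{L}(\mathcal{H}^n)} \leq e^{(-\lambda + \norme{Q}_{\mathcal{L}(\mathcal{H}^n)})t} =: e^{Mt},
\end{equation*}
where $M := -\lambda + \norme{Q}_{\mathcal{L}(\mathcal{H}^n)}$. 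This delivers the claimed exponential bound.

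It remains to upgrade from a $C_0$-semigroup to an \emph{analytic} semigroup. The cleanest route is to recall that analyticity is stable under bounded perturbations: if $A$ generates an analytic semigroup and $B \in \mathcal{L}(\mathcal{H}^n)$, then $A+B$ also generates an analytic semigroup (this is the analytic counterpart of the perturbation result, and is standard, see e.g. \cite{Paz83}). Since the previous lemma shows $D\mathbb{A}$ generates an analytic semigroup on $\mathcal{H}^n$ and $Q$ is bounded, analyticity of $(e^{t\mathbb{L}})_{t\geq 0}$ follows immediately.

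The only mild subtlety — and the point I would treat with some care — is the sign convention for the exponential bound. The previous lemma gives a decaying bound $e^{-\lambda t}$ for $D\mathbb{A}$, but after adding the perturbation $Q$ the growth rate can become positive, which is why the statement records the bound in the form $e^{Mt}$ with $M>0$ rather than a decay. This is genuinely unavoidable: the zero-order coupling $Q$ need not be dissipative, so the best one can assert in general is the stated exponential growth estimate. Beyond this bookkeeping, the argument is entirely routine, relying only on the boundedness of $Q$ and the cited perturbation theorems.
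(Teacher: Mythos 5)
Your proposal is correct and follows exactly the route the paper takes: the paper's entire proof is the one-line citation of the bounded perturbation theorem \cite[Section 3.2, Corollary 2.2]{Paz83} applied to $\mathbb{L}=D\mathbb{A}+Q$ with $Q$ bounded on $\mathcal{H}^n$, which is precisely your argument spelled out in more detail (including the preservation of analyticity and the growth bound $e^{(\omega+\norme{Q})t}$).
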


We now focus on the dissipation estimate. Recall that the eigenelements of the Stokes operator have been introduced in \eqref{eq:eigenfunctionstokes}. We consider the following orthogonal decomposition of $\mathcal H^n$, for some $\Gamma>0$,
\begin{equation}
\mathcal H^n = \mathcal P_{\Gamma} \bigoplus\mathcal P_{\Gamma}^{\perp},
\end{equation}
where
\begin{equation}
\mathcal P_{\Gamma} := \left\{ \left(\sum_{\gamma_k \leq \Gamma} a_k^{(i)} \phi_k\right)_{1 \leq i \leq n}\ ;\ a_k^{(i)} \in \R\quad \forall 1 \leq i \leq n\right\}.
\end{equation}
The projection on $ P_{\Gamma}$ is denoted by $\Pi_{\Gamma}$ that is
\begin{equation}
\Pi_{\Gamma} \left(\sum_{\gamma_k } a_k^{(i)} \phi_k \right) = \sum_{\gamma_k \leq \Gamma} a_k^{(i)} \phi_k\qquad \forall (a_k^{(i)})_{1 \leq i \leq n} \in \ell^2(\N^*)^n.
\end{equation}
We have the following result.
\begin{proposition}
\label{prop:dissipationestimate}
There exists $C>0$ such that for every $\Gamma >0$, $y_0 \in \mathcal P_{\Gamma}^{\perp}$, 
\begin{equation}
\label{eq:expdecay}
\norme{e^{t \mathbb L} y_0}_{\mathcal H^n} \leq C e^{- \Gamma t}\qquad \forall t \in [0,1].
\end{equation}
\end{proposition}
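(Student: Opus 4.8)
The plan is to diagonalize the principal part in the Stokes eigenbasis, which turns the semigroup into a family of finite-dimensional matrix exponentials, and then to control these uniformly using only the coercivity \eqref{eq:dpositive}. Recall the eigenpairs $(\gamma_k,\phi_k)$ from \eqref{eq:eigenfunctionstokes}. For $a=(a^{(1)},\dots,a^{(n)})\in\R^n$ write $a\phi_k:=(a^{(1)}\phi_k,\dots,a^{(n)}\phi_k)\in\mathcal H^n$ and set $V_k:=\{a\phi_k\ ;\ a\in\R^n\}$. The $V_k$ are pairwise orthogonal, their closed sum is $\mathcal H^n$, and since $\mathbb A\phi_k=\gamma_k\phi_k$ one checks directly that
\begin{equation}
\mathbb L(a\phi_k)=(M_k a)\phi_k,\qquad M_k:=\gamma_k D+Q\in\R^{n\times n}.
\end{equation}
Thus each $V_k$ is invariant under $\mathbb L$, and on $V_k$ the free evolution $y'+\mathbb L y=0$ reduces to the ODE $a'+M_k a=0$; hence the (decaying) semigroup acts as $e^{t\mathbb L}(a\phi_k)=(e^{-tM_k}a)\phi_k$.

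First I would expand $y_0\in\mathcal P_\Gamma^{\perp}$ as $y_0=\sum_{\gamma_k>\Gamma}a_k\phi_k$ with $a_k\in\R^n$. By orthonormality of the $\phi_k$ and the invariance above,
\begin{equation}
\norme{e^{t\mathbb L}y_0}_{\mathcal H^n}^2=\sum_{\gamma_k>\Gamma}\abs{e^{-tM_k}a_k}^2\leq\Big(\sup_{\gamma_k>\Gamma}\norme{e^{-tM_k}}^2\Big)\norme{y_0}_{\mathcal H^n}^2 ,
\end{equation}
so the whole statement reduces to a uniform exponential bound on $\norme{e^{-tM_k}}$ over the range $\gamma_k>\Gamma$.

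To bound $\norme{e^{-tM_k}}$ I would invoke the logarithmic-norm inequality $\norme{e^{sA}}\leq e^{s\mu(A)}$ for $s\geq0$, where $\mu(A)=\lambda_{\max}\big(\tfrac12(A+A^{*})\big)$. Taking $A=-M_k$ and using that \eqref{eq:dpositive} means exactly $\tfrac12(D+D^{*})\geq cI$, together with Weyl's inequality for eigenvalues of a sum of symmetric matrices,
\begin{equation}
\mu(-M_k)=\lambda_{\max}\Big(-\gamma_k\tfrac{D+D^{*}}{2}-\tfrac{Q+Q^{*}}{2}\Big)\leq-c\gamma_k+\norme{Q}.
\end{equation}
Hence $\norme{e^{-tM_k}}\leq e^{\norme{Q}t}e^{-c\gamma_k t}$, and for $t\in[0,1]$, $\gamma_k>\Gamma$ this is $\leq e^{\norme{Q}}e^{-c\Gamma t}$. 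Combined with the previous display this gives \eqref{eq:expdecay} with $C=e^{\norme{Q}}$ (the decay rate being a fixed multiple of $\Gamma$).

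The main obstacle is precisely this last, uniform matrix-exponential estimate. Because $D$ is only assumed coercive — neither symmetric nor a priori diagonalizable — the matrices $M_k$ are in general non-normal, so $e^{-tM_k}$ may exhibit transient growth and cannot be controlled by diagonalizing or by a crude eigenvalue bound. The logarithmic norm is the right device here, since it converts the quadratic coercivity \eqref{eq:dpositive} directly into a decay rate that is uniform in $k$ and linear in $\gamma_k$, while the bounded zero-order coupling $Q$ only shifts the rate by the constant $\norme{Q}$. An alternative would be to first establish $\norme{e^{-t\gamma_k D}}\leq e^{-c\gamma_k t}$ by the same numerical-range argument and then absorb $Q$ through a Duhamel expansion and Grönwall's lemma, but this yields the same rate and the same type of constant.
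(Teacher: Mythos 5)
Your proof is correct and follows essentially the same route as the paper: expand $y_0$ in the Stokes eigenbasis, observe that each mode evolves by the $n\times n$ system $a_k'+(\gamma_k D+Q)a_k=0$, and use the coercivity \eqref{eq:dpositive} to get a decay rate $c\gamma_k-\norme{Q}$ uniform in $k$. Your logarithmic-norm inequality $\norme{e^{sA}}\leq e^{s\mu(A)}$ is exactly the packaged form of the paper's step of taking the scalar product of the ODE with $a_k$ and applying Gr\"onwall, and both arguments (like the paper's) really yield the rate $e^{-c\Gamma t}$ rather than $e^{-\Gamma t}$, which is all that is needed downstream.
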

\begin{proof}
Let us take an initial data of the form
\begin{equation}
y_0^{(i)} = \sum_{\gamma_k > \Gamma} a_k^{(i)}(0) \phi_k.
\end{equation}
We have that the free evolution of the system gives
\begin{equation}
y^{(i)}(t,x) = \sum_{\gamma_k> \Gamma} a_k^{(i)}(t) \phi_k(x)\qquad (t, x) \in [0,1] \times \Omega,
\end{equation}
where
\begin{equation}
\label{eq:systedoDirect}
\left\{
\begin{array}{ll}
(a_k^{(i)})' +  \sum_{j=1}^n d_{ij} \gamma_k a_k^{(j)} + \sum_{j=1}^n q_{ij} a_k^{(j)} =0& \text{in}\ (0,1),\\
a_k^{(i)}(0)=a_{k,0}^{(i)}.&
\end{array}
\right.
\end{equation}
We take the scalar product of \eqref{eq:systedoDirect} with $a_k=(a_k^{(i)})_{1 \leq i \leq n}$ and we  obtain from \eqref{eq:dpositive} that
\begin{equation}
\|a_k(t)\|_{\R^n}^2 \leq C \exp(-\Gamma t) \exp(Q t) |a_k(0)|^2\qquad \forall t \in [0,1],\ \forall k \geq 1.
\end{equation}
By bounding $\|\exp(Qt)\| \leq C$ for $t \in [0,1]$, we obtain the result by using that
\begin{equation*}
\sum_{i=1}^n \norme{y^{(i)}(t,\cdot)}_{L^2(\Omega)^N}^2 =  \sum_{\gamma_k> \Gamma} |a_k(t)|^2 \leq C \exp(-\Gamma t) \sum_{\gamma_k> \Gamma} |a_k(0)|^2 \leq C  \exp(-\Gamma t) \norme{y(0,\cdot)}_{\mathcal H^n}^2.
\end{equation*}
This ends the proof.
\end{proof}

\subsection{A direct Lebeau-Robbiano strategy}

Our goal is to prove the sufficient part of \Cref{th:mainresult1} with the control cost estimate \eqref{eq:controlcost}. We fix $T \in (0,1)$.

Our starting point is the following spectral inequality, see \cite[Theorem 1.5]{CSSZ20} that is a generalization of \cite[Theorem 3.1]{CSL16} to the case of measurable subset.

\begin{proposition}
Let $\omega$ be a measurable subset contained in $\Omega$ such that $|\omega|>0$. There exists a positive constant $C>0$ such that for every sequence of complex numbers $(a_j) \in \C^{\N}$, for every $\Gamma >0$, we have
\begin{equation}
\label{eq:spectral}
\sum_{\gamma_k \leq \Gamma} |a_j|^2 = \int_{\Omega} \left| \sum_{\gamma_k \leq \Gamma} a_k \phi_k(x) \right|^2 dx \leq C e^{C \sqrt{\Gamma}} \int_{\omega} \left| \sum_{\gamma_k \leq \Gamma} a_k \phi_k(x) \right|^2 dx.
\end{equation}
\end{proposition}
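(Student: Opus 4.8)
The plan is to prove this spectral inequality by the Lebeau--Robbiano strategy, adapted to the Stokes operator as in \cite{CSL16,CSSZ20}: augment the spatial domain by one extra variable so that the spectral sum becomes the Cauchy trace of a solution to a stationary (Stokes-type) system, and then run a Carleman / quantitative unique continuation argument. Writing $u := \sum_{\gamma_k \leq \Gamma} a_k \phi_k$, I would introduce on the cylinder $\mathcal Q := (0,S)\times \Omega$ the augmented pair
\[
w(s,x) := \sum_{\gamma_k \leq \Gamma} a_k \frac{\sinh(\sqrt{\gamma_k}\, s)}{\sqrt{\gamma_k}}\, \phi_k(x),
\qquad
\pi(s,x) := \sum_{\gamma_k \leq \Gamma} a_k \frac{\sinh(\sqrt{\gamma_k}\, s)}{\sqrt{\gamma_k}}\, p_k(x).
\]
Since $\partial_s^2 \big(\sinh(\sqrt{\gamma_k}\,s)/\sqrt{\gamma_k}\big) = \gamma_k\, \sinh(\sqrt{\gamma_k}\,s)/\sqrt{\gamma_k}$ and $-\Delta \phi_k + \nabla p_k = \gamma_k \phi_k$, the pair $(w,\pi)$ solves the elliptic Stokes system
\[
\partial_s^2 w + \Delta_x w - \nabla_x \pi = 0, \quad \nabla_x \cdot w = 0 \ \text{ in } \mathcal Q, \qquad w = 0 \ \text{ on } (0,S)\times \partial\Omega,
\]
with Cauchy data $w|_{s=0} = 0$ and $\partial_s w|_{s=0} = u$. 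Thus recovering $\norme{u}_{L^2(\Omega)}^2 = \sum_{\gamma_k \leq \Gamma} \abs{a_k}^2$ reduces to a quantitative unique continuation estimate for this stationary system, observed on $(0,S)\times \omega$.

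Next I would establish a global Carleman estimate for the augmented operator $(w,\pi) \mapsto (\partial_s^2 w + \Delta_x w - \nabla_x \pi,\ \nabla_x \cdot w)$, with a weight $e^{2\tau\varphi}$ chosen so as to exploit the vanishing Cauchy data at $s=0$ and to absorb the boundary contributions on $\partial\Omega$. Such an estimate yields an interpolation inequality of the form
\[
\norme{\partial_s w|_{s=0}}_{L^2(\Omega)}^2 \leq C e^{C\tau} \int_0^S\!\!\int_\omega \abs{w}^2 \,dx\,ds + C e^{-c\tau} \norme{w}_{H^1(\mathcal Q)}^2, \qquad \tau \geq 1,
\]
controlling the Cauchy trace $u$ by a local observation of $w$ plus a small remainder.

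I would then track the frequency cutoff: because $\gamma_k \leq \Gamma$, each temporal factor satisfies $\sinh(\sqrt{\gamma_k}\,s)/\sqrt{\gamma_k} \leq C e^{\sqrt{\Gamma}\, S}$, so $\norme{w}_{H^1(\mathcal Q)} \leq C e^{C\sqrt{\Gamma}} \norme{u}_{L^2(\Omega)}$. Optimizing $\tau \sim \sqrt{\Gamma}$ to balance the two error terms and absorbing the remainder into the left-hand side produces exactly the factor $e^{C\sqrt{\Gamma}}$; a final Caccioppoli/elliptic bound relating $\int_0^S\!\int_\omega \abs{w}^2$ back to $\int_\omega \abs{u}^2$ then gives the spectral inequality for an open observation set. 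The last step is to upgrade from an open subset to a merely measurable $\omega$ with $\abs{\omega}>0$: using that $u$ is real-analytic in $\Omega$ with doubling index / analyticity radius controlled by $\sqrt{\Gamma}$ (via elliptic regularity for the Stokes eigenfunctions), a propagation-of-smallness-from-measurable-sets argument in the spirit of \cite{CSSZ20} lets one replace the integral over an open set by the one over a positive-measure set, at the cost of a constant of the same order $e^{C\sqrt{\Gamma}}$.

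I expect the main obstacle to be the Carleman estimate for the coupled augmented Stokes operator: unlike the scalar Laplacian case, one must control the pressure $\pi$, which is only determined through the incompressibility constraint, together with the associated boundary terms on $\partial\Omega$, while keeping the weight compatible with the vanishing Cauchy data at $s=0$. The measurable-set refinement is the second delicate point, since it requires quantitative analyticity estimates, uniform in $\Gamma$, for sums of Stokes eigenfunctions. Since the full statement is precisely \cite[Theorem 1.5]{CSSZ20}, I would in practice invoke that result directly; the sketch above records the strategy underlying it.
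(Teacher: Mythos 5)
The paper gives no proof of this proposition: it simply invokes it as \cite[Theorem 1.5]{CSSZ20} (the extension to measurable sets of \cite[Theorem 3.1]{CSL16}), which is exactly what you do in your final paragraph, so your proposal matches the paper's approach. Your accompanying sketch of the underlying Lebeau--Robbiano augmentation (the $\sinh(\sqrt{\gamma_k}\,s)/\sqrt{\gamma_k}$ lifting, the Carleman/interpolation step, and the propagation of smallness from measurable sets) is a correct and accurate account of how that cited result is actually proved, but it is supplementary to what the paper itself contains.
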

\begin{rmk}
\label{rmk:openproblemstokesSpectral}
In order to generalize \Cref{th:mainresult1} to controls with only $N-1$ components, we ask the following question, as far as we know, still open. Does there exists a positive constant $C>0$ such that for every sequence of complex numbers $(a_j) \in \C^{\N}$, for every $\Gamma >0$, we have
\begin{equation}
\sum_{\gamma_k \leq \Gamma} |a_j|^2 = \int_{\Omega} \left| \sum_{\gamma_k \leq \Gamma} a_k \phi_k(x) \right|^2 dx \leq C e^{C \sqrt{\Gamma}} \sum_{i=1}^{N-1} \int_{\omega} \left| \sum_{\gamma_k \leq \Gamma} a_k \phi_k^{(i)}(x) \right|^2 dx,
\end{equation}
that is does the spectral estimate for the Stokes operator, with an observation on only the first $N-1$ components holds?
\end{rmk}

The following result is an observability of \eqref{eq:adjointcoupledstokes} for low frequences.
\begin{proposition}
\label{prop:obsLowfreq}
There exist $C>0$, $p_1 \in \N$ such that for every $\tau \in (0,T)$, $\Gamma>0$ and $\varphi_{0} \in \mathcal{P}_{\Gamma}$, the solution $\varphi$ of \eqref{eq:adjointcoupledstokes} satisfies
\begin{equation}
\label{eq:obslowFreq}
\norme{\varphi(\tau,\cdot)}_{\mathcal{H}^n}^2 \leq \frac{C}{\tau^{p_1}} e^{C \sqrt{\Gamma}} \int_{0}^{\tau} \sum_{i=1}^m \int_{\omega_i} | (\mathbb B^* R^* \varphi)^{(i)} (t,x)|^2 dx dt
\end{equation}
\end{proposition}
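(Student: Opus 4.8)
The plan is to run a direct Lebeau--Robbiano argument: restricting the data to the low-frequency space $\mathcal{P}_\Gamma$ decouples the estimate into a purely spatial spectral inequality and a purely temporal finite-dimensional observability estimate. First I would use that $\mathbb{L}^* = D^*\mathbb{A} + Q^*$ leaves $\mathcal{P}_\Gamma$ invariant: the matrices $D^*$ and $Q^*$ act only on the component index $i \in \{1,\dots,n\}$, while $\mathbb{A}$ acts on the space variable and is diagonalized by the $\phi_k$, so the two operations commute. Hence, for $\varphi_0 \in \mathcal{P}_\Gamma$ the solution of \eqref{eq:adjointform} stays in $\mathcal{P}_\Gamma$ and can be written $\varphi(t) = \sum_{\gamma_k \leq \Gamma} b_k(t)\phi_k$, where $b_k(t) \in \R^n$ solves the ODE system $\partial_t b_k = (\gamma_k D^* + Q^*) b_k$, and $\norme{\varphi(t)}_{\mathcal{H}^n}^2 = \sum_{\gamma_k \leq \Gamma} |b_k(t)|^2$.

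Next, since $(\mathbb{B}^* R^* \varphi)^{(i)} = 1_{\omega_i}\sum_{\gamma_k \leq \Gamma}(R^* b_k(t))_i\,\phi_k$, I would apply the spectral inequality \eqref{eq:spectral} at each fixed time $t$, with $\omega = \omega_i$ and coefficients $a_k = (R^* b_k(t))_i$, for each $i = 1,\dots,m$ (retaining the largest of these finitely many constants). Summing over $i$ and integrating over $(0,\tau)$ yields
\[
\int_0^\tau \sum_{\gamma_k \leq \Gamma} |R^* b_k(t)|^2\,dt \leq C e^{C\sqrt{\Gamma}}\int_0^\tau \sum_{i=1}^m \int_{\omega_i} |(\mathbb{B}^* R^* \varphi)^{(i)}(t,x)|^2\,dx\,dt ,
\]
which is the right-hand side of \eqref{eq:obslowFreq} up to the factor $e^{C\sqrt{\Gamma}}$. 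It then remains to dominate $\sum_{\gamma_k \leq \Gamma}|b_k(\tau)|^2$ by the left-hand side of this display.

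This last point is handled mode by mode. Setting $w = b_k(\tau)$, the substitution $s = \tau - t$ turns the integrand into $|R^* e^{-(\gamma_k D^* + Q^*)s} w|^2$, so the inequality to prove, $|b_k(\tau)|^2 \leq C_k(\tau)\int_0^\tau |R^* b_k(t)|^2\,dt$, is exactly the observability inequality for the finite-dimensional pair $\big(-(\gamma_k D^* + Q^*),\,R^*\big)$. Its observability matrix has the same rank as the matrix $K_k$ of \eqref{eq:Kp}, hence is of full rank $n$ under the standing Kalman assumption \eqref{eq:kalman} (Proposition \ref{prop:equivalencekalman}), so observability holds for every $k$. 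The crux, following Lissy and Zuazua \cite{LZ19}, is a \emph{uniform} quantitative version of this: the cost satisfies $C_k(\tau) \leq C\,\tau^{-p_1}(1+\gamma_k)^{p_2}$ with $p_1,p_2 \in \N$ depending only on $n,D,Q,R$. The mechanism behind the polynomial dependence is a lower bound on the Kalman Gramian, $\lambda_{\min}(K_k K_k^*) \geq c\,(1+\gamma_k)^{-p_2}$, combined with a short-time estimate bounding $\int_0^\tau |R^* e^{-(\gamma_k D^* + Q^*)s}w|^2\,ds$ from below by the first $n$ Taylor coefficients $K_k^* w$ of the observed trajectory, which produces the factor $\tau^{p_1}$. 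Since $\gamma_k \leq \Gamma$ one has $(1+\gamma_k)^{p_2} \leq (1+\Gamma)^{p_2} \leq C e^{C\sqrt{\Gamma}}$, so summing over $\gamma_k \leq \Gamma$ and chaining with the previous display (merging the two $e^{C\sqrt{\Gamma}}$ factors) gives \eqref{eq:obslowFreq}.

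The genuinely delicate step is this uniform finite-dimensional observability, i.e. controlling the cost simultaneously in the large parameter $\gamma_k$ and in the short time $\tau$. The fast modes of $e^{-(\gamma_k D^* + Q^*)s}$ decay very quickly and are therefore expensive to observe, which is the source of the growth in $\gamma_k$, while the short observation window produces the negative power of $\tau$. What is essential for the sharp cost \eqref{eq:controlcost} is that the growth in $\gamma_k$ be only \emph{polynomial}: an exponential bound would destroy the $e^{C\sqrt{\Gamma}}$ rate. Securing this polynomial dependence through the lower bound on $\lambda_{\min}(K_k K_k^*)$ as a function of $\gamma_k$ is the heart of the argument borrowed from \cite{LZ19}, and I expect it to be the main obstacle.
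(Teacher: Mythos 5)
Your proposal is correct and follows essentially the same route as the paper's proof: decompose the solution onto the Stokes eigenmodes, apply the spectral inequality \eqref{eq:spectral} at each time and for each control region $\omega_i$, and invoke the uniform-in-$\gamma_k$ and $\tau$ finite-dimensional observability estimate of Lissy--Zuazua, whose polynomial growth in $\gamma_k$ is absorbed into the factor $e^{C\sqrt{\Gamma}}$. The only difference is that you spell out the mechanism behind that quantitative cost bound (Gramian lower bound and short-time estimate), whereas the paper simply cites \cite[Lemma 3.1]{LZ19}.
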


\begin{proof}

The proof is inspired by \cite[Section 3]{LZ19}.

We have for every $1 \leq i \leq n$,
\begin{equation*}
\label{eq:decompInit}
\varphi_{0}^{(i)}(x) = \sum_{\gamma_k \leq \Gamma} \varphi_{k,0}^{(i)} \phi_k(x)\qquad \varphi_{k,0}^{(i)} \in \R.
\end{equation*}

\indent Then, the solution $\varphi$ of \eqref{eq:adjointcoupledstokes} is given, component by component, as follows
\begin{equation}
\label{eq:decompsol}
\varphi^{(i)}(t,x) = \sum\limits_{\gamma_k \leq \Gamma} \varphi_{k}^{(i)}(t) \phi_k(x),
\end{equation}
where $\varphi = (\varphi_k^{(i)})_{1 \leq i \leq n}$ is the unique solution of the ordinary differential system for $1 \leq i \leq n$
\begin{equation}
\label{eq:systedo}
\left\{
\begin{array}{ll}
-(\varphi_k^{(i)})' + \gamma_k \sum_{j=1}^{n} d_{j,i} \varphi_{k}^{(j)} + \sum_{j=1}^{n} q_{j,i}\varphi_{k}^{(j)}= 0& \text{in}\ (0,\tau),\\
\varphi_k^{(i)}(0)=\varphi_{k,0}^{(i)}.&
\end{array}
\right.
\end{equation}

We apply the spectral estimate \eqref{eq:spectral} to get for every $1 \leq i \leq m$,
\begin{equation*}
\norme{\sum_{ \gamma_k \leq \Gamma} \left(\sum_{j=1}^n r_{ji} \varphi_k^{(j)}(t) \right) \phi_k(x)}_{L^2(\Omega)^N}^2 \leq C e^{C \sqrt{\Gamma}} \norme{\sum_{ \gamma_k \leq \Gamma} \left(\sum_{j=1}^n r_{ji} \varphi_k^{(j)}(t) \right) \phi_k(x)}_{L^2(\omega_i)^N}^2.
\end{equation*}
Then we sum on every $1 \leq i \leq m$ to get 
\begin{multline*}
\sum_{1 \leq i \leq m} \norme{\sum_{ \gamma_k \leq \Gamma} \left(\sum_{j=1}^n r_{ji} \varphi_k^{(j)}(t) \right) \phi_k(x)}_{L^2(\Omega)^N}^2 \\
\leq C e^{C \sqrt{\Gamma}} \sum_{1 \leq i \leq m} \norme{\sum_{ \gamma_k \leq \Gamma} \left(\sum_{j=1}^n r_{ji} \varphi_k^{(j)}(t) \right) \phi_k(x)}_{L^2(\omega_i)^N}^2
\end{multline*}
Then, we integrate between $0$ and $\tau$ to get
\begin{multline}
\label{eq:firstobslow}
\int_{0}^{\tau} \sum_{1 \leq i \leq m} \norme{\sum_{ \gamma_k \leq \Gamma} \left(\sum_{j=1}^n r_{ji} \varphi_k^{(j)}(t) \right) \phi_k(x)}_{L^2(\Omega)^N}^2 dt\\
 \leq C e^{C \sqrt{\Gamma}} \sum_{1 \leq i \leq n} \int_{0}^{\tau} \norme{\sum_{ \gamma_k \leq \Gamma} \left(\sum_{j=1}^m r_{ji} \varphi_k^{(j)}(t) \right) \phi_k(x)}_{L^2(\omega_i)^N}^2 dt.
\end{multline}
We recognize on the right hand side of \eqref{eq:firstobslow}
\begin{equation}
\label{eq:firstobslowright}
\sum_{1 \leq i \leq m} \int_{0}^{\tau} \norme{\sum_{ \gamma_k \leq \Gamma} \left(\sum_{j=1}^m r_{ji} \varphi_k^{(j)}(t) \right) \phi_k(x)}_{L^2(\omega)^N}^2 dt = \norme{R^* \varphi(t,x)}_{L^2(0,\tau;\mathcal U)}^2.
\end{equation}
So, now the goal is to give a lower bound of the left hand side of \eqref{eq:firstobslow} to obtain the desired observability inequality. By considering \eqref{eq:systedo}, we know from the Kalman condition that this system is observable for every $k$ so we have in particular
\begin{equation}
\label{eq:firstobslowleft}
\norme{\varphi_k(\tau)}_{\R^n} = \sum_{1 \leq i \leq n} |\varphi_k^{(i)}(\tau)|^2 \leq C(\tau,k,n) \int_{0}^{\tau} \left(\sum_{1 \leq i \leq m} \left(\sum_{j=1}^n r_{ji} \varphi_k^{(j)}(t) \right) \right)^2 dt.
\end{equation}
Moreover, from \cite[Lemma 3.1]{LZ19}, we have an estimate of the following form 
\begin{equation*}
C(\tau,k,n) \leq C \left( \sqrt{\tau} + \frac{1}{\tau^{n-1/2}}\right) F (\lambda_k),
\end{equation*}
where $C = C_n>0$ is independent of $\tau$ and $k$ and $F$ is a rational function of $\lambda_k$. We then deduce by using the orthogonality condition on the eigenfunctions of the Stokes operator in $L^2(\Omega)^N$ and \eqref{eq:firstobslow}, \eqref{eq:firstobslowright}, \eqref{eq:firstobslowleft} that 
\begin{equation*}
\norme{\varphi(\tau,\cdot)}_{\mathcal H^n}^2 =\norme{\sum_{\gamma_k \leq \Gamma} \varphi_k(\tau)}_{\mathcal H^n}^2 
\leq  C \max(C(\tau,k,n)) e^{C \sqrt{\Gamma}}\norme{R^* \varphi(t,x)}_{L^2(0,\tau;\mathcal U)}^2.
\end{equation*}
Threfore, we have \eqref{eq:obslowFreq} from the two previous estimates.
\end{proof}

By the Hilbert Uniqueness Method (see \cite[Theorem 2.44]{Cor07}) and the observability inequality for the solution of the adjoint system \eqref{eq:adjointcoupledstokes} for low frequences, see \Cref{prop:obsLowfreq}, we immediately deduce the following partial null-controllability result.
\begin{proposition}
\label{prop:nullcontrolLowfreq}
There exist $C>0$, $p_1 \in \N$ such that for every $\tau \in (0,T)$, $\Gamma>0$ and $y_{0} \in \mathcal{P}_{\Gamma}$, there exists $v \in L^2(0,\tau;\mathcal U)$ satisfying
\begin{equation}
\label{eq:coutpartialcontrol}
\norme{v}_{L^2(0,\tau;\mathcal U)} \leq \frac{C}{\tau^{p_1}} e^{C \sqrt{\Gamma}} \norme{y_0}_{\mathcal{H}^n},
\end{equation}
such that the corresponding solution $y$ of \eqref{eq:controlform} satisfies $y(T, \cdot) = 0$.
\end{proposition}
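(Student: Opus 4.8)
The plan is to obtain \Cref{prop:nullcontrolLowfreq} as the dual, via the Hilbert Uniqueness Method (\cite[Theorem 2.44]{Cor07}), of the low-frequency observability inequality \eqref{eq:obslowFreq} established in \Cref{prop:obsLowfreq}. The structural fact that makes this work is that $\mathbb L = D\mathbb A + Q$ leaves the finite-dimensional subspace $\mathcal P_{\Gamma}$ invariant: both $D\mathbb A$ and $Q$ act only on the component index and preserve each spectral subspace $\operatorname{span}\{\phi_k : \gamma_k \leq \Gamma\}$ of the Stokes operator, so that $\mathbb L \Pi_{\Gamma} = \Pi_{\Gamma}\mathbb L$. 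Hence, for $y_0 \in \mathcal P_{\Gamma}$, the genuine control problem is the finite-dimensional evolution on $\mathcal P_{\Gamma}$ obtained by compressing \eqref{eq:controlform} with $\Pi_{\Gamma}$, whose adjoint is exactly the restriction of \eqref{eq:adjointcoupledstokes} to $\mathcal P_{\Gamma}$ appearing in \Cref{prop:obsLowfreq}.

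Concretely, I would minimize over $\varphi_0 \in \mathcal P_{\Gamma}$ the strictly convex functional
\[
J_\tau(\varphi_0) = \frac12 \int_0^\tau \sum_{i=1}^m \int_{\omega_i} \abs{(\mathbb B^* R^* \varphi)^{(i)}(t,x)}^2\,dx\,dt + \psc{y_0}{\varphi(\tau)}_{\mathcal H^n},
\]
where $\varphi$ is the adjoint solution issued from $\varphi_0$. Coercivity of $J_\tau$ is precisely the content of \eqref{eq:obslowFreq}, so $J_\tau$ admits a unique minimizer $\hat\varphi_0 \in \mathcal P_{\Gamma}$, and the associated control is $v := \mathbb B^* R^* \hat\varphi \in L^2(0,\tau;\mathcal U)$. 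Writing the Euler--Lagrange identity at $\hat\varphi_0$ and inserting the observability constant $\frac{C}{\tau^{p_1}} e^{C\sqrt{\Gamma}}$ yields $\norme{v}_{L^2(0,\tau;\mathcal U)}^2 \leq \frac{C}{\tau^{p_1}} e^{C\sqrt{\Gamma}} \norme{y_0}_{\mathcal H^n}^2$; taking square roots and renaming the constant $C$ and the exponent $p_1$ gives exactly the cost estimate \eqref{eq:coutpartialcontrol}.

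It then remains to pass from time $\tau$ to time $T$. Extending $v$ by $0$ on $(\tau,T)$, the trajectory evolves freely there, and since it has been driven to rest at time $\tau$ inside the invariant subspace $\mathcal P_{\Gamma}$, the semigroup identity $y(t) = e^{(t-\tau)\mathbb L} y(\tau) = 0$ for $t \in [\tau,T]$ gives the stated conclusion $y(T,\cdot) = 0$. The point requiring the most care --- and the main obstacle --- is exactly this reduction to the invariant subspace: the inequality of \Cref{prop:obsLowfreq} only controls the $\mathcal P_{\Gamma}$-component of the adjoint state, so one must argue that the HUM control, built from adjoint data in $\mathcal P_{\Gamma}$, steers the true state (and not merely its projection $\Pi_{\Gamma} y$) to zero at time $\tau$; this is precisely where the commutation $\mathbb L \Pi_{\Gamma} = \Pi_{\Gamma}\mathbb L$ and the hypothesis $y_0 \in \mathcal P_{\Gamma}$ intervene, since they force the relevant dynamics to remain finite-dimensional and decoupled from the high modes. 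A secondary but routine technical point is to track the dependence of the HUM cost as $\tau \to 0$ and $\Gamma \to \infty$, so that it inherits the $\tau^{-p_1} e^{C\sqrt{\Gamma}}$ behaviour of the observability constant without loss.
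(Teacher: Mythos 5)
Your overall route is the same as the paper's: its proof of \Cref{prop:nullcontrolLowfreq} is a one-line appeal to the Hilbert Uniqueness Method \cite[Theorem 2.44]{Cor07} combined with \Cref{prop:obsLowfreq}, and your HUM machinery (the functional $J_\tau$, coercivity from \eqref{eq:obslowFreq}, the Euler--Lagrange identity, the resulting cost bound) is carried out correctly. The genuine gap is in the step you yourself single out as the crux, and your resolution of it is wrong. The commutation $\mathbb L \Pi_\Gamma = \Pi_\Gamma \mathbb L$ is true, but it does \emph{not} make the controlled dynamics stay in $\mathcal P_\Gamma$: the control operator does not map into $\mathcal P_\Gamma$. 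Indeed, the $\phi_k$-component of $\mathbb B_j v^{(j)}$ is $\langle \mathbb B_j v^{(j)}, \phi_k \rangle_{\mathcal H} = \int_{\omega_j} v^{(j)} \cdot \phi_k \, dx$, and no Stokes eigenfunction can vanish a.e.\ on $\omega_j$ (this already follows from the spectral inequality \eqref{eq:spectral} applied to a single mode), so $R\mathbb B v$ excites modes with $\gamma_k > \Gamma$ --- including when $v = \mathbb B^* R^* \hat\varphi$ is built from an adjoint state $\hat\varphi \in \mathcal P_\Gamma$, since $1_{\omega_i}$ times a low-frequency combination is not a low-frequency combination. Hence the trajectory issued from $y_0 \in \mathcal P_\Gamma$ leaves $\mathcal P_\Gamma$ instantly. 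What the commutation does give is that $\Pi_\Gamma y$ solves the compressed equation $(\Pi_\Gamma y)' + \mathbb L \,\Pi_\Gamma y = \Pi_\Gamma R \mathbb B v$, whose adjoint with data in $\mathcal P_\Gamma$ is exactly the system of \Cref{prop:obsLowfreq}; so your HUM control achieves only $\Pi_\Gamma y(\tau) = 0$, i.e.\ $y(\tau) \in \mathcal P_\Gamma^{\perp}$, not $y(\tau) = 0$. Steering the \emph{full} state to zero from data in $\mathcal P_\Gamma$ is a strictly stronger statement: by duality it requires an inequality of the form $\norme{\Pi_\Gamma \varphi(0)}_{\mathcal H^n} \leq K \norme{\mathbb B^* R^* \varphi}_{L^2(0,\tau;\mathcal U)}$ with the adjoint data ranging over \emph{all} of $\mathcal H^n$, which cannot be extracted from \eqref{eq:obslowFreq} because $\mathbb B^* R^*$ does not commute with $\Pi_\Gamma$; in fact, it is essentially the conclusion of \Cref{th:mainresult1} restricted to finite-dimensional data, so arguing this way is circular.

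In fairness, the printed statement of \Cref{prop:nullcontrolLowfreq} (``$y(T,\cdot)=0$'', later quoted as ``$y(\tau,\cdot)=0$'') is loosely worded on precisely this point, and this may have misled you; the paper itself calls the result a \emph{partial} null-controllability result, and what is actually used in the Lebeau--Robbiano iteration is only the projection statement. Indeed, in the proof of \Cref{th:mainresult1} the role of the active interval is to guarantee $y(a_k+T_k) \in \mathcal P_{\mu_k}^{\perp}$, which follows from $\Pi_{\mu_k} y(a_k+T_k)=0$ for the controlled part together with the invariance of $\mathcal P_{\mu_k}^{\perp}$ under the free semigroup $e^{t\mathbb L}$; the dissipation estimate of \Cref{prop:dissipationestimate} then applies on the passive interval. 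So the repair of your argument is simple: replace the conclusion by ``$\Pi_\Gamma y(\tau) = 0$'' (equivalently $y(\tau) \in \mathcal P_\Gamma^{\perp}$), keep your HUM construction and the cost estimate \eqref{eq:coutpartialcontrol} verbatim, and discard the final step ``$y(t) = e^{(t-\tau)\mathbb L} y(\tau) = 0$ on $[\tau, T]$'', which has no role once the conclusion is stated correctly.
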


From \Cref{prop:nullcontrolLowfreq}, for every $\tau \in (0,T),\Gamma>0$ and $y_0 \in \mathcal P_{\Gamma}$, we introduce the notation:
\begin{equation}
\label{ControleMinL^2}
v_{\Gamma}(y_0, 0,\tau),
\end{equation}
such that the solution $y$ of \eqref{eq:controlform} satisfies $y(\tau,\cdot)=0$ and $v_{\Gamma}(y_0, 0,\tau)$ is the minimal-norm element of $L^2(0,\tau;\mathcal U)$ satisfying the estimate \eqref{eq:coutpartialcontrol}. In other words, $v_{\Gamma}(y_0, 0,\tau)$ is the projection of $0$ in the nonempty closed convex set of controls satisfying \eqref{eq:coutpartialcontrol} and driving the solution $y$ of \eqref{eq:controlform} in time $\tau$ to $0$.

We can now finish the proof of \Cref{th:mainresult1} by using the so-called Lebeau-Robbiano method \cite{LR95}.
\begin{proof}[Proof of \Cref{th:mainresult1}]
The proof is inspired by \cite[Section 6.2]{LRL12}. The constants $C,C'$ will increase from line to line.\\
\indent We split the interval $[0,T] = \cup_{k \in \N} [a_k, a_{k+1}]$ with $a_0 =0$, $a_{k+1} = a_{k} + 2 T_k$ and $T_k = \kappa T/2^{k}$ for $k\in \N$ and the constant $\kappa$ is chosen such that $2\sum\limits_{k=0}^{+\infty} T_k = T$. We also define $\mu_k = M2^{2k}$ for $M>0$ sufficiently large which will be defined later and for $k \in \N$. Then, we define the control $v$ in the following way:
\begin{itemize}
\item if $t \in (a_k, a_k+T_k)$, $v = v_{\mu_k}(\Pi_{\mathcal P_{\mu_k}}, a_k,T_k)$ and $y(t,\cdot) = e^{(t-a_k)\mathbb L}y(a_k,.) + \int_{a_k}^{t} e^{(t-s)\mathbb L} v(s,\cdot)ds$,
\item if $t \in (a_k+T_k, a_{k+1})$, $v = 0$ and $y(t,\cdot) = e^{(t-a_k-T_k)\mathbb L}y(a_k+T_k,\cdot)$,
\end{itemize}
In particular, we know that the semi-group $e^{t \mathbb L}$ is bounded for every $t \in [0,1]$ by \Cref{lem:boundedsemigroup}.\\
\indent By \eqref{eq:coutpartialcontrol}, the choice of $v$ during the interval time $[a_k, a_k +T_k]$ implies
\begin{align}
\label{eq:SolActiv}
\norme{y(a_k+T_k,\cdot)}_{L^2(\Omega)^n}^2 &\leq (C + C (\kappa 2^{-k} T)^{- p_1} e^{C \sqrt{M} 2^{k}}) \norme{y(a_k,\cdot)}_{L^2(\Omega)^n}^2\\
& \leq  \frac{C}{T^{p_1}} e^{C \sqrt{M}2^{k}} \norme{y(a_k,\cdot)}_{L^2(\Omega)^n}^2.\notag
\end{align}
During the passive period of the control, $t \in [a_k + T_k, a_{k+1}]$, the solution exponentially decreases by using the dissipation estimate \eqref{eq:expdecay} from \Cref{prop:dissipationestimate} 
\begin{equation}
\label{eq:SolPassiv}
\norme{y(a_{k+1},\cdot)}_{L^2(\Omega)^n}^2 \leq C' e^{-C'M2^{2k} T_k}  \norme{y(a_k+T_k,\cdot)}_{L^2(\Omega)^n}^2.
\end{equation}
Thus, by using $2^{2k} T_k = \kappa 2^{k}T$, \eqref{eq:SolActiv} and \eqref{eq:SolPassiv}, we have
$$\norme{y(a_{k+1},\cdot)}_{L^2(\Omega)^n}^2 \leq  \frac{C}{T^{p_1}} e^{C \sqrt{M} 2^{k}- C' M 2^{k}T }\norme{y(a_k,\cdot)}_{L^2(\Omega)^n}^2,$$
and consequently,
\begin{align}
\notag
\norme{y(a_{k+1},\cdot)}_{L^2(\Omega)^n}^2 &\leq  \left(\frac{C}{T^{p_1}}\right)^{k+1} e^{\sum_{j=0}^{k} \left(C\sqrt{M} 2^{j}-C'M T 2^{j}\right)} \norme{y_0}_{L^2(\Omega)^n}^2\\
& \leq e^{C/T + (C\sqrt{M} - C'MT)2^{k+1}}\norme{y_0}_{L^2(\Omega)^n}^2.\label{eq:estglobalsol}
\end{align} 
By taking $M$ such that $C \sqrt{M} - C' MT < 0$, for instance $M \geq 2(C/C'T)^{2}$, we conclude by \eqref{eq:estglobalsol} that we have $ \lim_{k \rightarrow + \infty} \norme{y(a_k,\cdot)} = 0$, i.e., $y(T,\cdot)=0$ because $t \mapsto y(t,\cdot) \in C([0,T];L^2(\Omega)^n)$ because $v \in L^2(Q_T)^m$ as we will show now.\\
\indent We have $\norme{v}_{L^2(0,\tau;\mathcal U)}^2 = \sum_{k=0}^{+\infty} \norme{v}_{L^2(a_k,a_k+T_k;\mathcal U)}^2$. Then, by using the estimate \eqref{eq:coutpartialcontrol} of the control on each time interval $(a_k,a_k+T_k)$ and the estimate \eqref{eq:estglobalsol}, we get
\begin{align}\notag
&\norme{v}_{L^2(0,\tau;\mathcal U)}^2\\
 &\leq \left(C T_0^{-p_1} e^{C\sqrt{M}} + \sum\limits_{k \geq 1} C T_k^{-p_1} e^{C\sqrt{M}2^k}  e^{C/T + (C\sqrt{M} - C'MT)2^{k}}\right) \norme{y_0}_{L^2(\Omega)^n}^2\notag\\
& \leq \left(C T^{-p_1} e^{C\sqrt{M}} + \sum\limits_{k \geq 1} C (2^{k}T^{-1})^{p_1}  e^{C/T} e^{(2C \sqrt{M}-C'MT) 2^{k}}\right) \norme{y_0}_{L^2(\Omega)^n}^2. \label{EsticontrolT}
\end{align}
By taking $M$ such that $2C \sqrt{M}-C'MT < 0$, for instance $M = 8 (C/C'T)^{2} \Rightarrow C \sqrt{M}-C'MT/2 = -C''/T$ with $C''>0$, we deduce from \eqref{EsticontrolT} that $v \in L^2(0,\tau;\mathcal U)$ and
\begin{align*}
\norme{v}_{L^2(0,\tau;\mathcal U)}^2&\leq C e^{C/T} \int_{0}^{+\infty} \left(\frac{\sigma}{T}\right)^{p_1} e^{-C'' \frac{\sigma}{T}} d\sigma \norme{y_0}_{L^2(\Omega)^n}^2\leq C e^{C/T} \norme{y_0}_{L^2(\Omega)^n}^2,
\end{align*}
which concludes the proof of \Cref{th:mainresult1}.
\end{proof}

\bibliographystyle{plain}
\small{\bibliography{stokes}}

\end{document}